\newcommand{\home}{\empty}
\title{On Turan's (3,4)-problem with forbidden configurations}
\author{Alexander A. Razborov\thanks{University of Chicago, {\tt razborov@cs.uchicago.edu}. Part of this work was done while the author was at Steklov Mathematical Institute, supported by the Russian Foundation
for Basic Research, and at Toyota Technological Institute, Chicago.}}
\begin{document}
\maketitle

\begin{abstract}
We identify three 3-graphs on five vertices each missing in all known extremal configurations for Turan's (3,4)-problem and prove Turan's conjecture for 3-graphs that are additionally known not to contain any induced copies of these 3-graphs. Our argument is based on an (apparently) new technique of ``indirect interpretation'' that allows us to retrieve additional structure from hypothetical counterexamples to Turan's conjecture, but in rather loose and limited sense. We also include two miscellaneous calculations in flag algebras that prove similar results about some other additional forbidden subgraphs.
\end{abstract}

\section{Introduction}

In the classical paper \cite{Man}, Mantel determined the minimal
number of edges a graph $G$ with a given number of vertices must have so that every three vertices
span at least one edge. In the paper
\cite{Tur} (that essentially started off the field of extremal
combinatorics), Tur\'an generalized Mantel's result to independent
sets of arbitrary size. He also asked if similar generalizations can
be obtained for hypergraphs, and these questions became notoriously
known ever since as one of the most difficult open problems in
discrete mathematics.

To be more specific, for a family $\scr H$ of $r$-uniform hypergraphs ({\em $r$-graphs} in what follows) and an integer $n$, let $\text{ex}_{\min}(n; \scr H)$ be the minimal possible number of edges  in an $n$-vertex $r$-graph not containing any of $H\in\scr H$ as an induced subgraph, and let

$$
\pi_{\min}(\scr H) \df \lim_{n\rightarrow\infty}
\frac{\text{ex}_{\min}(n; \scr H)}{{n\choose r}}
$$
(it is well-known that this limit exists). For $\ell>r\geq 2$, let $I^r_\ell$ be the empty $r$-graph on $\ell$ vertices. Then we still do not know $\pi_{\min}(I_\ell^r)$ for {\em any} pair $\ell>r\geq 3$. More information on the history and state of the art for this and related problems can be found in the recent comprehensive survey \cite{Kee}, and we will henceforth concentrate on the  simplest case  $r=3,\ \ell=4$. Tur\'an's conjecture says that $\pi_{\min}(I^3_4)=4/9$, and it is also sometimes called {\em Tur\'an's $(3,4)$-problem} or {\em tetrahedron problem}. De Caen \cite{Dec}, Giraud (unpublished) and Chung and Lu \cite{GrL} proved
increasingly stronger lower bounds on $\pi_{\min}(I^3_4)$, with the
current record being
$$
\pi_{\min}(I^3_4) \geq 0.438334
$$
\cite{turan, FaV}.

A prominent way to attack a difficult extremal problem is by first better understanding the nature of its (conjectured) extremal configurations and then gradually trying to solve this problem in a ``neighborhood'' of this set which is as ``large'' and ``natural'' as possible (we deliberately leave many terms in this sentence imprecise). In the context of Turan's (3,4)-problem, the first part of this program can be reasonably said to have been successfully completed in the series of classical papers by Tur\'an himself \cite{Tur}, Brown \cite{Bro}, Kostochka \cite{Kos} and  Fon-der-Flaass \cite{Fon}.

For the second part, Razborov \cite{turan} proved that
\begin{equation}\label{drawback}
    \pi_{\min}(I^3_4,G_3)= 4/9,
\end{equation}
where $G_3$ is the 3-graph on 4 vertices with 3 edges. In another paper \cite{fdf} of the same author, Tur\'an (3,4)-problem was settled for two broad classes of 3-graphs resulted from the Fon-der-Flaas  construction (its details will be reviewed below), each of these two classes including {\em all Tur\'an-Brown-Kostochka examples}.

On the contrary, the result \eqref{drawback} has the obvious limitation that the additional 3-graph $G_3$ is missing only in Tur\'an's original example. In fact, Pikhurko \cite{Pik} proved that his example is essentially the {\em only} extremal configuration for the extremal problem \eqref{drawback}.

\bigskip
The main purpose of this note is to circumvent this by showing that
\begin{equation}\label{new}
    \pi_{\min}(I^3_4,H_1,H_2,H_3)= 4/9,
\end{equation}
where $H_i$ are 3-graphs on 5 vertices with the following sets of 3-edges:
\begin{equation} \label{forbidden}
\longeq{
E(H_1) &\df& \{(123) (124) (134) (234) (125) (345)\}\\
E(H_2) &\df& \{(123) (124) (134) (234) (135) (145) (235) (245)\}\\
E(H_3) &\df& \{(123) (124) (134) (234) (125)(135) (145) (235) (245)\}}
\end{equation}
In words: in each of these 3-graphs, $\{1,2,3,4\}$ span a clique, and the link of the remaining vertex 5 is either a perfect matching ($H_1$) or the complement to a perfect matching ($H_2$) or the complement to a single edge ($H_3$).
Unlike $G_3$, our new forbidden 3-graphs {\em are} missing in all Tur\'an-Brown-Kostochka configurations (see Claim \ref{missing} below).

While we do not have immediate ideas how to get rid of $H_1,H_2,H_3$ in \eqref{new} (obvious attempts encounter the same kind of complications that in general make extremal problems for hypergraphs so painstakingly difficult), we would like to emphasize that our methods use some (apparently) novel ideas that might be of independent interest. Specifically, we are trying to utilize one of the results in \cite{fdf} that solves Turan's (3,4)-problem for the class of 3-graphs resulting, via  Fon-der-Flaas  interpretation, from a class of oriented graphs. In the ideal world, one would have to take an arbitrary $(I^3_4,H_1,H_2,H_3)$-free 3-graph $G$ and somehow create an useful oriented graph from this class to which we could apply that result. We, however, doubt very much that this goal can be easily achieved in its entirety.

Our (apparently) new observation is that our final purpose is so limited that we actually need something much more modest and relaxed than this ``global'' structure. Namely, we can assume that there exists a {\em fixed} 3-graph $H$ that is {\em not} realizable via the Fon-der-Flaass construction and that has a {\em constant density} in a hypothetical counterexample $G$. The latter fact allows us to apply standard machinery from Ramsey Theory to find in $G$ several extremely well-positioned copies of $H$. And then we attempt to create the Fon-der-Flaass structure on $H$ by retrieving information encoded  in this gadget. It turns out that $H_1,H_2,H_3$ are precisely the only obstacles for completing this project (that, as we assumed from the beginning, can not be completed).

As we said, this method appears to be rather general and uses very little of the specifics of the Turan's (3,4)-problem. This, however, does not  apply to the last (information retrieval) part that, on the contrary, seems to shed at least some light on the otherwise quite mysterious nature of the Kostochka-Fon-der-Flaass examples.

\medskip
We also include two more results proved via routine calculations in Flag Algebras. The first oimproves upon \eqref{drawback} by replacing $G_3$ with the 3-graph having the edge set $\{(123)(124) (134) (235)\}$, and the second establishes the better bound $\pi_{\min}(I^3_4, M_2)\geq 0.4557$, where $M_2$ is the only $I^3_4$-free 3-graph with six vertices and six edges.

\section{Preliminaries}

Let $[n]\df \{1,2,\ldots,n\}$, and, for a finite set $X$, let $[X]^k$ be the collection of its $k$-element subsets. $[[n]]^k$ is abbreviated to $[n]^k$.

In this paper we will be working with 3-graphs that will be normally denoted by the letters $G,H$, possibly with indices, and with oriented graphs, or {\em orgraphs} (directed graphs without loops, multiple or anti-parallel edges) typically denoted by $\Gamma$. In both cases, $V(\cdot)$ will be the set of vertices, and $E(\cdot)$ will be the set of 3-edges/oriented edges. For $V_0\subseteq V(G)$, $G|_{V_0}$ is the 3-graph induced on $V_0$, and likewise for $\Gamma|_{V_0}$. Two vertices $u,v$ are {\em independent} in an orgraph $\Gamma$ if neither $\edge uv$ nor $\edge vu$ is in $E(\Gamma)$.

Given a finite set $\scr H$ of $3$-graphs and an integer $n$, let $\text{ex}_{\min}(n; \scr H)$ be the minimal possible number of edges  in an $n$-vertex $r$-graph not containing any of $H\in\scr H$ as an induced subgraph, and let

$$
\pi_{\min}(\scr H) \df \lim_{n\rightarrow\infty}
\frac{\text{ex}_{\min}(n; \scr H)}{{n\choose r}}
$$
(it is well-known that this limit exists). Let $I^3_4$ be the empty $3$-graph on four vertices, and let $H_1,H_2,H_3$ be the 3-graphs defined by their edge sets in \eqref{forbidden}.

Then our main result reads as follows:

\begin{theorem} \label{main}
$\pi_{\min}(I^3_4, H_1, H_2, H_3)=4/9$.
\end{theorem}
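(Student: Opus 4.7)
\medskip
\noindent\textbf{Proof plan.} The upper bound $\pi_{\min}(I^3_4,H_1,H_2,H_3)\leq 4/9$ is immediate from Claim~\ref{missing}: the Tur\'an--Brown--Kostochka extremal configurations for $\pi_{\min}(I^3_4)$ have edge density approaching $4/9$ and, by that claim, are simultaneously $(H_1,H_2,H_3)$-free. The substantive content of the theorem is therefore the matching lower bound, and this is what the rest of the plan addresses.

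I would argue by contradiction. Suppose $G_n$ is a sequence of $(I^3_4,H_1,H_2,H_3)$-free $3$-graphs whose edge densities tend to some $\alpha<4/9$, and, passing to a subsequence, assume that all finite subgraph densities converge. The target is the result of~\cite{fdf}, which establishes $\pi_{\min}=4/9$ for $3$-graphs arising via the Fon-der-Flaass construction from orgraphs. Realizing $G_n$ globally in this way seems out of reach, so I would instead use the ``indirect interpretation'' strategy outlined in the introduction: fix a single $3$-graph $H$ which is \emph{not} Fon-der-Flaass realizable but still appears in $G_n$ with density bounded below by a positive constant. Such $H$ must exist, for otherwise the limit of $G_n$ would be concentrated on the Fon-der-Flaass class, forcing density at least $4/9$ by~\cite{fdf}.

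With $H$ fixed, its positive density combined with Ramsey's theorem produces, inside every sufficiently large $G_n$, arbitrarily long sequences of copies of $H$ with the following uniformity property: the induced structure on the union of any constantly many of these copies, together with any bounded number of additional ``probe'' vertices, depends only on the combinatorial type of the probes and not on the chosen copies. From this supply of well-positioned copies I would then attempt to reconstruct, pair by pair of vertices of $V(H)$, an orientation that turns $H$ into the Fon-der-Flaass interpretation of some orgraph $\Gamma$; the direction of each pair is to be read off the interaction of the Ramsey-selected external vertices with that pair, and $I^3_4$-freeness of $G_n$ is used to rule out the obvious local obstructions and to propagate constraints.

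The main obstacle --- and the step I expect to be the crux of the argument --- is proving that this reconstruction is both well-defined and consistent: the orientation assigned to each pair of $V(H)$ must be unambiguously determined by the Ramsey data, and the resulting orgraph $\Gamma$ must Fon-der-Flaass-interpret precisely to $H$. The role of $H_1,H_2,H_3$ is to catalogue the only local five-vertex configurations that could break either well-definedness or consistency of this reconstruction; because $G_n$ is forbidden from containing them, the reconstruction succeeds, $H$ is exhibited as Fon-der-Flaass realizable, and the original choice of $H$ is contradicted.
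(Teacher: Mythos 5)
Your plan follows the same route as the paper (it is essentially the strategy sketched in the introduction): upper bound from Claim~\ref{missing}, then a contradiction argument that extracts a positively dense ``bad'' $\ell$-vertex graph, regularizes it by Ramsey-type arguments, and reads off an orientation whose Fon-der-Flaass interpretation contradicts badness. However, as written it has a genuine gap: the entire mathematical content of the lower bound is concentrated in the step you defer. After the applications of Proposition~\ref{detupling} and Claim~\ref{turan_ramsey}, one must actually define the orientation (in the paper: $\langle i,j\rangle\in E(\Gamma)$ iff neither $(a_ib_ia_j)$ nor $(a_ib_ib_j)$ is an edge of $G_m$) and then carry out the case analysis showing (a) two out-edges at $i$ force $(ijk)\in E(G)$, using only $I^3_4$-freeness; (b) two independences at $i$ force $(ijk)\in E(G)$, using $H_1$-freeness; and (c) independence of $i,j$ together with an edge $\langle j,k\rangle$ forces $(ijk)\notin E(G)$, using $H_2$- and $H_3$-freeness --- from which one deduces $E(FDF(\Gamma))\subseteq E(G)$ and that $\Gamma$ has no induced $\bar P_3$ or $\vec C_4$. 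Your sentence asserting that $H_1,H_2,H_3$ ``catalogue the only local five-vertex configurations that could break \dots{} the reconstruction'' is precisely the statement that needs proof; asserting it leaves the theorem unproved.

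There is also a structural inaccuracy that would make your dichotomy fail even as a skeleton. The cited result (Proposition~\ref{fdf}) does \emph{not} give density $4/9$ for all $3$-graphs of the form $FDF(\Gamma)$: it requires $\Gamma$ to be free of induced $\bar P_3$ as well as $\vec C_4$, and the general Fon-der-Flaass class is exactly what remains open. Consequently, ``$H$ is not Fon-der-Flaass realizable'' is the wrong notion of a bad graph: if you only exclude exactly-realizable $H$, the claim ``otherwise the limit is concentrated on the Fon-der-Flaass class, forcing density at least $4/9$'' is unsupported. The paper's notion is weaker and tailored to what the Ramsey data can deliver: $G$ is regular if $E(G)\supseteq E(FDF(\Gamma))$ for some $\vec C_4$- and $\bar P_3$-free $\Gamma$ on $V(G)$, which suffices for the density bound because adding edges only helps. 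Relatedly, your requirement that the reconstructed $\Gamma$ ``Fon-der-Flaass-interpret precisely to $H$'' is both stronger than needed and not what the argument yields; what the forbidden graphs actually buy (via Claim~\ref{three}) is the $\bar P_3$- and $\vec C_4$-freeness of $\Gamma$ together with the containment $E(FDF(\Gamma))\subseteq E(G)$, and your plan never mentions the $\bar P_3$ condition at all, even though it is indispensable for invoking Proposition~\ref{fdf}.
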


Since, besides developing some interesting techniques, our primary motivation stems from the fact that $H_1,H_2,H_3$ are missing in all Kostochka configurations, we begin with a (relatively easy) clarification and verification of this fact. The reader interested only in the proof of Theorem \ref{main} may safely skip this digression.

Let $\Omega={\Bbb Z_3}\times {\Bbb R}$, and consider the (infinite) orgraph $\Gamma_K=(\Omega,E_K)$ given by
$$
E_K\df \set{\langle (a,x),(b,y)\rangle}{(x+y<0 \land b=a+1) \lor (x+y>0 \land b=a-1)}.
$$
$\Gamma_K$ does not contain induced oriented cycles $\vec C_4$.

For an arbitrary $\vec C_4$-free orgraph $\Gamma$, let  $FDF(\Gamma)$ be the 3-graph with $V(FDF(\Gamma))=V(\Gamma)$ in which $(uvw)$ is declared to span an edge if and only if $\Gamma|_{\{u,v,w\}}$ either contains an isolated (that is, of both in-degree and out-degree 0) vertex, or contains a vertex of out-degree 2 ({\em Fon-der-Flaass interpretation} \cite{Fon}). Then $FDF(\Gamma)$ is $I^3_4$-free, and all known extremal configurations when the number of vertices is divisible by three are precisely of the form $FDF(\Gamma_K)|_{{\Bbb Z}_3\times S}$ for a finite $S\subseteq {\Bbb R}$.

\begin{claim} \label{missing}
$FDF(\Gamma_K)$ does not contain induced copies of $H_1, H_2, H_3$.
\end{claim}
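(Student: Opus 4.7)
The plan is to reduce the claim to a case analysis on the class distribution in $\Bbb Z_3$ of a hypothetical induced copy. Specifically, the clique $\{1,2,3,4\}\subseteq V(H_i)$ must lift to a 4-clique in $FDF(\Gamma_K)$, and the link of vertex $5$ must be isomorphic, as a 4-vertex graph, to the link of $5$ in $H_i$ --- namely the matching $M_2$ for $H_1$, the 4-cycle $C_4$ for $H_2$, and $K_4-e$ for $H_3$. Accordingly I would first classify the 4-cliques of $FDF(\Gamma_K)$, and then, for each such clique, enumerate the links that a fifth vertex can produce.

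First I would record the basic structure of $\Gamma_K$: two distinct vertices $(a,x),(b,y)\in\Omega$ are orgraph-adjacent iff $a\neq b$, in which case the edge is oriented $a\to a+1$ when $x+y<0$ and $a\to a-1$ when $x+y>0$. A triple therefore has an isolated vertex only if all three vertices share a $\Bbb Z_3$-class, and the ``out-degree $2$'' condition of the Fon-der-Flaass interpretation unpacks into pairs of sign conditions on second-coordinate sums. A brief bookkeeping then shows that 4-cliques come in exactly three patterns by class distribution: $4{+}0{+}0$ (always a clique, since every triple is totally disconnected in $\Gamma_K$); $3{+}1{+}0$ (iff the three cross-edges from the singleton all point toward the majority); and $2{+}1{+}1$ (iff the majority coordinates $x_1,x_2$ satisfy $-y<x_i<-z$, where $y$ and $z$ are the two singleton coordinates). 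The $2{+}2{+}0$ pattern is structurally ruled out, because the two ``majority-of-2'' triples impose contradictory signs on the cross edges.

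Next, for each 4-clique pattern and each choice of class of the fifth vertex $v_5=(a_5,s)$, every potential link-edge is controlled by a single sign condition of the form $s+\ast>0$ or $s+\ast<0$, so as $s$ varies the achievable 4-vertex links are governed by a handful of monotone thresholds and fall into a short list up to isomorphism: the empty graph, $K_2$, $P_3+K_1$, $K_3+K_1$, $K_{1,3}$, the paw, and $K_4$. Crucially, none of these is isomorphic to $M_2$, $C_4$, or $K_4-e$. In most subcases the conclusion is essentially immediate because an entire row of link-edges is structurally absent (for instance, when $v_5$ shares the class of the singleton in a $3{+}1{+}0$ clique, no link-edge incident to that singleton can occur), and all three forbidden links contain some such edge.

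The one delicate subcase, which I expect to be the main obstacle, is a $2{+}1{+}1$ clique $\{u_1,u_2,w,\rho\}$ with $v_5$ placed in the majority class. Here the link of $v_5$ automatically contains the two disjoint edges $\{u_1,u_2\}$ and $\{w,\rho\}$, already an $M_2$; the four remaining potential edges split into two symmetric pairs governed by the signs of $s+y$ and $s+z$. Naively, turning both pairs off would realize $M_2$ exactly and produce an induced $H_1$; but this demands $s<-y$ and $s>-z$ simultaneously, i.e.\ $y<z$, whereas the 4-clique existence condition $-y<x_i<-z$ already forces $y>z$. This single sign clash is the crux of the argument. The 4-edge links actually realizable in this subcase are then paws, and 5-edge links are impossible because the edges come in symmetric pairs, so $C_4$ and $K_4-e$ are excluded for free.
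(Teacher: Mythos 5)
Your overall scheme is viable and, for $H_1,H_2$, is essentially the paper's own: your three clique patterns $4{+}0{+}0$, $3{+}1{+}0$, $2{+}1{+}1$ (with $2{+}2{+}0$ excluded) are exactly the three orgraphs of Figure \ref{cliques}, your existence condition $-y<x_i<-z$ for the $2{+}1{+}1$ pattern is correct (with $y$ the coordinate of the singleton in class $a+1$ and $z$ that in class $a-1$, $a$ being the majority class), and you fold $H_3$ into the same link enumeration where the paper argues separately from its unique non-edge. However, there is a genuine error precisely in the subcase you call the crux. The assertion that for $v_5=(a,s)$ in the majority class of a $2{+}1{+}1$ clique the link \emph{automatically} contains $\{w,\rho\}$ is false: $v_5,w,\rho$ lie in three distinct classes, so they span a tournament triangle in $\Gamma_K$, which can be the induced $\vec C_3$, and then $(v_5w\rho)$ is \emph{not} an edge of $FDF(\Gamma_K)$. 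Concretely, take $u_1=(a,1.5)$, $u_2=(a,1.7)$, $w=(a+1,-1)$, $\rho=(a-1,-2)$ (a legitimate $2{+}1{+}1$ clique, since $-y<x_i<-z$ reads $1<x_i<2$), and $v_5=(a,0)$: then $v_5\to w\to\rho\to v_5$ in $\Gamma_K$, and the link of $v_5$ is the triangle on $\{u_1,u_2,\rho\}$ with $w$ isolated, i.e.\ $K_3+K_1$, which does not fit your picture of ``two automatic disjoint edges plus two switchable pairs''.

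Because of this, your parity argument (``edges come in symmetric pairs, so 5-edge links are impossible'') does not stand as written, and the realizable links in this subcase are in fact $K_3+K_1$, the paw and $K_4$. The conclusion survives, and the repair is short. First, $\{u_1,u_2\}$ is always a link edge, so a matching link would have to be $\{u_1u_2,w\rho\}$, which is excluded by your sign clash (cliqueness gives $y>z$, killing both cross pairs would need $-z<s<-y$, hence $y<z$); this part never needed the false claim, so $H_1$ is fine. Second, a $C_4$ or a $K_4-e$ link would have to contain exactly one edge from one of the pairs $\{u_1w\},\{u_2w\}$ or $\{u_1\rho\},\{u_2\rho\}$, impossible since each pair is switched by a single sign ($s+y$, resp.\ $s+z$). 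The one remaining possibility, $K_4$ minus the edge $\{w,\rho\}$, needs all four cross edges, i.e.\ $-y<s<-z$; but then both $w$ and $\rho$ point to $v_5$ in $\Gamma_K$, so the triple $\{v_5,w,\rho\}$ cannot be $\vec C_3$ and $\{w,\rho\}$ is forced into the link after all, a contradiction. With this patch (and under the same implicit genericity assumption $x+y\neq 0$ for cross-class pairs that the paper also makes), your enumeration closes all cases.
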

\begin{proof}
Assume the contrary, and let $\omega_i = (a_i,x_i)\ (i\in [5])$ defines an induced embedding of some $H\in \{H_1,H_2,H_3\}$ into $FDF(\Gamma_K)$. We assume w.l.o.g. that the real numbers $x_i$ are pairwise distinct. We treat the cases $H\in \{H_1,H_2\}$ and $H=H_3$ separately.

\smallskip\noindent
{\bf Case 1.} {\sc $H=H_1$ or $H=H_2$}.

$\{\omega_1,\omega_2,\omega_3,\omega_4\}$ is a clique in $FDF(\Gamma_K)$. It is easy to see then that in the orgraph $\Gamma_K$ these four vertices must span one of the three orgraphs on Figure \ref{cliques}.

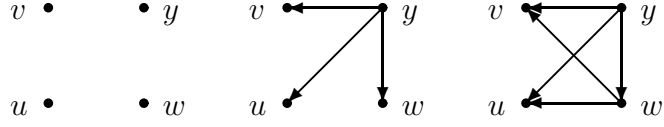
\begin{figure}[tb]
\begin{center}
\setlength{\unitlength}{0.254mm}
\begin{picture}(353,66)(20,-73)
        \special{color rgb 0 0 0}\allinethickness{0.254mm}\special{sh 0.99}\put(40,-15){\ellipse{4}{4}} 
        \special{color rgb 0 0 0}\allinethickness{0.254mm}\special{sh 0.99}\put(40,-65){\ellipse{4}{4}} 
        \special{color rgb 0 0 0}\allinethickness{0.254mm}\special{sh 0.99}\put(90,-15){\ellipse{4}{4}} 
        \special{color rgb 0 0 0}\allinethickness{0.254mm}\special{sh 0.99}\put(90,-65){\ellipse{4}{4}} 
        \special{color rgb 0 0 0}\put(20,-21){\shortstack{$v$}} 
        \special{color rgb 0 0 0}\put(20,-71){\shortstack{$u$}} 
        \special{color rgb 0 0 0}\put(100,-71){\shortstack{$w$}} 
        \special{color rgb 0 0 0}\put(100,-21){\shortstack{$y$}} 
        \special{color rgb 0 0 0}\allinethickness{0.254mm}\special{sh 0.99}\put(165,-15){\ellipse{4}{4}} 
        \special{color rgb 0 0 0}\allinethickness{0.254mm}\special{sh 0.99}\put(165,-65){\ellipse{4}{4}} 
        \special{color rgb 0 0 0}\allinethickness{0.254mm}\special{sh 0.99}\put(215,-15){\ellipse{4}{4}} 
        \special{color rgb 0 0 0}\allinethickness{0.254mm}\special{sh 0.99}\put(215,-65){\ellipse{4}{4}} 
        \special{color rgb 0 0 0}\put(145,-21){\shortstack{$v$}} 
        \special{color rgb 0 0 0}\put(145,-71){\shortstack{$u$}} 
        \special{color rgb 0 0 0}\put(225,-71){\shortstack{$w$}} 
        \special{color rgb 0 0 0}\put(225,-21){\shortstack{$y$}} 
        \special{color rgb 0 0 0}\allinethickness{0.254mm}\special{sh 0.99}\put(290,-15){\ellipse{4}{4}} 
        \special{color rgb 0 0 0}\allinethickness{0.254mm}\special{sh 0.99}\put(290,-65){\ellipse{4}{4}} 
        \special{color rgb 0 0 0}\allinethickness{0.254mm}\special{sh 0.99}\put(340,-15){\ellipse{4}{4}} 
        \special{color rgb 0 0 0}\allinethickness{0.254mm}\special{sh 0.99}\put(340,-65){\ellipse{4}{4}} 
        \special{color rgb 0 0 0}\put(270,-21){\shortstack{$v$}} 
        \special{color rgb 0 0 0}\put(270,-71){\shortstack{$u$}} 
        \special{color rgb 0 0 0}\put(350,-71){\shortstack{$w$}} 
        \special{color rgb 0 0 0}\put(350,-21){\shortstack{$y$}} 
        \special{color rgb 0 0 0}\allinethickness{0.254mm}\put(215,-15){\vector(-1,0){50}} 
        \special{color rgb 0 0 0}\allinethickness{0.254mm}\put(215,-15){\vector(0,-1){50}} 
        \special{color rgb 0 0 0}\allinethickness{0.254mm}\put(215,-15){\vector(-1,-1){50}} 
        \special{color rgb 0 0 0}\allinethickness{0.254mm}\put(340,-15){\vector(-1,0){50}} 
        \special{color rgb 0 0 0}\allinethickness{0.254mm}\put(340,-15){\vector(-1,-1){50}} 
        \special{color rgb 0 0 0}\allinethickness{0.254mm}\put(340,-15){\vector(0,-1){50}} 
        \special{color rgb 0 0 0}\allinethickness{0.254mm}\put(340,-65){\vector(-1,1){50}} 
        \special{color rgb 0 0 0}\allinethickness{0.254mm}\put(340,-65){\vector(-1,0){50}} 
        \special{color rgb 0 0 0} 
\end{picture}
\caption{\label{cliques} $\{u,v,w,y\}=\{\omega_1,\omega_2,\omega_3,\omega_4\}$}
\end{center}
\end{figure}

If $\{\omega_1,\omega_2,\omega_3,\omega_4\}$ are independent in $\Gamma_K$ (that is, if $a_1=a_2=a_3=a_4$), then $\omega_5$ may not be independent of them, and its {\em link} $L$ in $FDF(\Gamma_K)|_{\{\omega_1,\omega_2,\omega_3,\omega_4\}}$ defined as
$$
L\df\set{e\in [\{\omega_1,\omega_2,\omega_3,\omega_4\}]^2}{e \cup \{\omega_5\} \in E(FDF(\Gamma_K))}
$$
will be a clique on the set
\begin{equation} \label{clique}
\set{z\in \{\omega_1,\omega_2,\omega_3,\omega_4\}}{\edge{\omega_5}z\in E(\Gamma_K)}.
\end{equation}
None of the graphs $H_1, H_2$ has this form.

Assume now that $\{\omega_1,\omega_2,\omega_3,\omega_4\}$ span $\vec K_{1,3}$ in $\Gamma_K$, as shown on Figure \ref{cliques}, middle picture. If $\omega_5$ is independent from $\{u,v,w\}$ then $\{u,v,w,\omega_5\}$ is another clique in $FDF(\Gamma_K)$, if $\omega_5$ is independent from $y$ then $y$ is an isolated vertex in the link $L$, and if $\edge y{\omega_5}\in E(\Gamma_K)$ then $y$ has degree 3 in $L$. None of these may happen in $H_1$ or $H_2$ so we are left with the case when $\edge{\omega_5}y\in E(\Gamma_K)$ and $\omega_5$ is also connected to the vertices $u,v,w$. The again $L$ will be the clique on the same set \eqref{clique} which is impossible. This completes the analysis of the second case on Figure \ref{cliques}.

Finally, assume that $\{\omega_1,\omega_2,\omega_3,\omega_4\}$ span in $\Gamma_K$ the third orgraph from Figure \ref{cliques}. By the same token as in the previous case, $\omega_5$ may not be independent of $y$ and $\edge y{\omega_5}\not\in E(\Gamma_K)$. Thus, as before, we necessarily have $\edge{\omega_5}y\in E(\Gamma_K)$.

If $\omega_5$ is independent from $u,v$, then $\edge{\omega_5}w\in E(\Gamma_K)$ since otherwise $y$ again would be isolated in the link $L$ of $\omega_5$. But then the configuration spanned by $y,w,u,\omega_5$ is impossible in $\Gamma_K$. Indeed, if, say, $u=\omega_i$ then the two vertices $y,w$ force the opposite inequalities between $x_i$ and $x_5$ (recall that $\omega_j=(a_j,x_j)\in {\Bbb Z}_3\times {\Bbb R}$).

Finally, if $\omega_5$ is independent of $w$, then either $\edge{\omega_5}u\in E(\Gamma_K)$ or $\edge{\omega_5}v\in E(\Gamma_K)$ (otherwise $y$ would be isolated in $L$), and we assume w.l.o.g. that $\edge{\omega_5}u\in E(\Gamma_K)$. But now $\edge{\omega_5}v\not\in E(\Gamma_K)$ (as otherwise $(v,u,y)$ would be a triangle in $L$) and $\edge v{\omega_5}\not\in E(\Gamma_k)$ (as otherwise $(v,w,y)$ would be an independent set in $L$). This contradiction completes the analysis in Case 1.

\smallskip\noindent
{\bf Case 2.} {\sc $H=H_3$}.

Since $(\omega_3\omega_4\omega_5)\not \in E(FDF(\Gamma_K))$, either these three vertices span in $\Gamma_K$ the oriented cycle $\vec C_3$ or, after a suitable re-numeration,  $\edge{\omega_3}{\omega_4}\in E(\Gamma_K)$ while $\omega_3$ and $\omega_5$ are independent. In the first case, if $\omega_1$ is independent of (say) $\omega_3$ and (say) $\edge{\omega_3}{\omega_4}\in E(\Gamma_K)$, this implies one more missing edge $(\omega_1\omega_3\omega_4)$ in $FDF(\Gamma_K)$. In the second case, by the same token neither of $\omega_1,\omega_2$ can be independent of $\omega_3,\omega_5$. But then at least two of the vertices $\omega_1,\omega_2,\omega_4$ must be independent, and then these two vertices along with $\omega_3,\omega_5$ may span at most two edges in $H$ which may not happen since $H=H_3$. Claim \ref{missing} is proved.
\end{proof}

Along with $\vec C_4$, the orgraph $\Gamma_K$ does not contain induced copies of $\bar P_3$ (an oriented edge plus an isolated vertex). One of the two main results in \cite{fdf} settles Turan's (3,4)-problem for 3-graphs obtained via the Fon-der-Flaass interpretation from an oriented graph with the latter property. Before stating this result here, let us introduce a (tiny) bit of the flag algebra formalism \cite{flag} that we will need in the next section anyway.

For two 3-graphs $H,G$ with $|V(H)|\leq |V(G)|$ we let $p(H,G)$ denote the {\em density} of induced copies of $H$ in $G$. We let $\rho$ denote the 3-graph on three vertices consisting of a single edge; thus, $p(\rho, G)$ is simply the edge density of $G$. We have the following basic {\em chain rule} (see e.g. \cite[Lemma 2.2]{flag}):
\begin{equation} \label{chain_rule}
p(H, \widehat G)= \sum_{G\in\scr M_\ell}p(H,G)p(G,\widehat G),
\end{equation}
where $|V(H)|\leq\ell\leq |V(\widehat G)|$ and $\scr M_\ell$ is the set of all 3-graphs on $\ell$ vertices, up to an isomorphism.

\begin{proposition}[\protect{\cite[Theorem 2.3]{fdf}}] \label{fdf}
For any increasing sequence $\{\Gamma_n\}$ of oriented graphs without induced copies of $\vec C_4$ or $\bar P_3$, $p(\rho, FDF(\Gamma_n))\geq \frac 49(1-0(1))$.
\end{proposition}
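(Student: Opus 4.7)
The strategy is to translate the combinatorial hypotheses into structural information about $\Gamma_n$, then express $p(\rho,FDF(\Gamma_n))$ as an explicit polynomial functional in the structural parameters, and finally optimize.

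\textbf{Structure.} First, $\bar P_3$-freeness of $\Gamma_n$ is equivalent to ``being independent'' being transitive, hence an equivalence relation, on $V(\Gamma_n)$: if $u,v$ were independent, $v,w$ independent, but $u,w$ not, then $\{u,v,w\}$ would induce a $\bar P_3$. Thus $V(\Gamma_n)=V_1\sqcup\cdots\sqcup V_k$ and $\Gamma_n$ is an oriented complete multipartite graph. Second, for any two parts $V_i,V_j$, the $\vec C_4$-free hypothesis unpacks to the absence of a $2\times 2$ permutation submatrix in the bipartite $0/1$ matrix $f(u,v)=\mathbf 1[u\to v]$, and a standard sorting argument then shows that after suitable reordering the $1$'s of $f$ form a Young diagram (the rows, and the columns, are totally ordered by coordinate-wise dominance). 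Passing to the limit, $\Gamma_n$ is modeled by an orgraphon whose underlying graphon is complete multipartite and whose bipartite blocks are Young-diagram shapes.

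\textbf{Classifying non-edges of $FDF(\Gamma_n)$.} In this regime a triple fails to span an $FDF$-edge in exactly two ways: (a) two of the vertices lie in some $V_i$ and the third, $w\in V_j$ with $j\neq i$, has at least one in-neighbor among the pair (so that the out-degree-$2$ condition is not met and no isolated vertex exists); or (b) the three vertices lie in three distinct classes and induce an oriented $\vec C_3$. Writing $\alpha_i=|V_i|/n$ and $\gamma_{ij}=e(V_i\to V_j)/(|V_i||V_j|)$ (with $\gamma_{ij}+\gamma_{ji}=1$), Jensen's inequality applied to the convex function $\binom x2$ upper-bounds the type-(a) contribution purely in terms of the $\alpha_i$'s and $\gamma_{ij}$'s, while the type-(b) contribution is the density of oriented triangles in the tripartite tournament on $(V_i,V_j,V_k)$, itself constrained by the Young-diagram shape of each pairwise block.

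\textbf{Optimization and main obstacle.} Adding both contributions yields a polynomial lower bound $p(\rho,FDF(\Gamma_n))\geq 1-F+o(1)$ for an explicit functional $F$ in the part weights, bipartite densities, and Young-diagram shapes, and it remains to show that $F\leq 5/9$ over all admissible configurations, with equality attained by the cyclic tripartite orgraphon $\Gamma_K|_{\mathbb Z_3\times S}$ ($k=3$, $\alpha_i=1/3$, each block the anti-diagonal Young diagram). The core difficulty lies in this last step, because the two non-edge types trade off against each other: making out-degree profiles constant (Jensen-optimal for type (a)) is incompatible with the $\vec C_4$-free Young-diagram constraint and shrinks the $\vec C_3$-count in type (b), while $\Gamma_K$'s anti-diagonal structure has non-constant out-degrees but simultaneously maximizes the $\vec C_3$-count in a correlated, non-``random'' fashion. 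The argument must therefore couple both contributions tightly and rule out configurations with $k\geq 4$ parts or uneven $\alpha_i$; this is most naturally done via calculus of variations on the orgraphon, or alternatively via the flag-algebra semidefinite framework of \cite{flag} applied to the universe of $\bar P_3$- and $\vec C_4$-free orgraphs.
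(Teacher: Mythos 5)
This proposition is not proved in the paper at all: it is quoted verbatim from \cite[Theorem 2.3]{fdf}, so there is no in-paper argument to compare yours against, and your proposal has to stand on its own as a proof of the cited result. Its first half does stand: $\bar P_3$-freeness indeed makes independence an equivalence relation, so $\Gamma_n$ is an oriented complete multipartite graph; an induced $\vec C_4$ must have both of its non-adjacent diagonals inside classes, so the $\vec C_4$-condition localizes to the bipartite blocks and is equivalent to the out-neighborhoods of the vertices of one class into another being nested (your ``Young diagram'' normal form); and the non-edges of $FDF(\Gamma_n)$ are exactly your types (a) (a pair inside a class plus an outside vertex with an in-neighbor in the pair) and (b) (an oriented $\vec C_3$ across three classes). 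This reduction is correct and is a sensible opening move.

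The gap is that the proof stops exactly where the theorem begins. Everything after ``it remains to show that $F\leq 5/9$'' is a description of the difficulty, not an argument: you yourself note that the two contributions must be ``coupled tightly'' and then defer the coupling to an unspecified calculus-of-variations or semidefinite computation. Moreover, the specific relaxation you set up provably cannot work as stated. In the extremal configuration $\Gamma_K|_{{\Bbb Z}_3\times S}$ the type-(a) non-edge density is $4/9$ (conditional probability $2/3$ on the $2{+}1$ triples) and the type-(b) density is $1/9$ (cyclic triangles have conditional density $1/2$ among rainbow triples), totalling $5/9$. But if you bound type (a) by Jensen purely in terms of the block densities $\gamma_{ij}$, then at $\gamma_{ij}=1/2$ the type-(a) bound alone is $\tfrac23\cdot\tfrac34=\tfrac12$, and since cyclic density $1/2$ is attainable under the nested constraint, any bound on type (b) depending only on the $\gamma$'s and shapes gives at least $\tfrac29\cdot\tfrac12=\tfrac19$; the decoupled functional therefore only yields $F\leq \tfrac12+\tfrac19=\tfrac{11}{18}>\tfrac59$. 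So the inequality you need is false for the relaxed functional $F$ as you define it: the whole content of the theorem is the refined, coupled analysis (how the same orientation data simultaneously controls the out-degree second moments and the $\vec C_3$-count across triples of classes), and that analysis is missing. As written, the proposal is a correct structural setup plus a restatement of the problem, not a proof.
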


Next, we need a multi-partite version of Ramsey's theorem that we formulate here at the level of generality sufficient for our purposes.

\begin{proposition}[\protect{\cite[Theorem 5.1.5]{GrRS}}] \label{ramsey}
For any $\ell>0$, $n>0$ and $r_1,\ldots,r_\ell>0$ there exists $N>0$ such that if $|B_i|=N\ (1\leq i\leq\ell)$ and $[B_1]^{r_1}\times\cdots\times [B_\ell]^{r_\ell}$ is colored in two colors then there exist $A_i\subseteq B_i\ (|A_i|=n)$ such that $[A_1]^{r_1}\times\cdots\times [A_\ell]^{r_\ell}$ is monochromatic.
\end{proposition}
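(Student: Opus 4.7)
The plan is to induct on $\ell$, using (multi-color) classical Ramsey as both the base case and the main workhorse in the inductive step. For $\ell=1$ the statement is exactly Ramsey's theorem: for $r,n$ and any finite number of colors, there is an $N$ such that every coloring of $[N]^r$ admits a monochromatic subset of size $n$.

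For the inductive step, assume the claim for $\ell-1$ with parameters $n, r_1, \ldots, r_{\ell-1}$, obtaining some integer $N'$. The key move is to \emph{compress} the coloring $\chi$ of $[B_1]^{r_1} \times \cdots \times [B_\ell]^{r_\ell}$ into a coloring on the single coordinate $[B_\ell]^{r_\ell}$, at the cost of blowing up the palette. Concretely: first restrict each $B_i$ (for $i<\ell$) to an arbitrary subset $B_i'$ of size $N'$; then there are at most $T = 2^{\binom{N'}{r_1} \cdots \binom{N'}{r_{\ell-1}}}$ distinct 2-colorings of $[B_1']^{r_1} \times \cdots \times [B_{\ell-1}']^{r_{\ell-1}}$, so the map $\Phi$ sending $S_\ell \in [B_\ell]^{r_\ell}$ to the slice $\chi(\cdot, S_\ell)$ is a $T$-coloring of $[B_\ell]^{r_\ell}$. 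Invoking classical $T$-color Ramsey yields $N_\ell$ such that, for $|B_\ell| \geq N_\ell$, there is $A_\ell \subseteq B_\ell$ of size $n$ on which $\Phi$ is constant, meaning that all slices $\chi(\cdot, S_\ell)$ with $S_\ell \in [A_\ell]^{r_\ell}$ agree and together define a single coloring $\chi' : [B_1']^{r_1} \times \cdots \times [B_{\ell-1}']^{r_{\ell-1}} \to \{0,1\}$. Applying the inductive hypothesis to $\chi'$ produces $A_i \subseteq B_i'$ of size $n$ ($i<\ell$) on which $\chi'$ is monochromatic. Setting $N = \max(N', N_\ell)$, the product $[A_1]^{r_1} \times \cdots \times [A_\ell]^{r_\ell}$ is then $\chi$-monochromatic by construction.

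Beyond the standard ``encode one coordinate of the product as a large palette and apply single-coordinate Ramsey'' trick, no real ideas are involved; the exercise is mostly one of bookkeeping the nested parameter dependencies. The only mild caveat is that the recursion stacks the classical Ramsey bound $\ell$ times, producing a tower-type upper bound on $N$, but since the statement asserts mere existence this is harmless. I foresee no substantive obstacle.
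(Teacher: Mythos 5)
Your proof is correct: the induction on the number of factors, compressing all slices over the first $\ell-1$ coordinates into a $T$-colour palette and applying multicolour hypergraph Ramsey to the last coordinate, goes through without any gaps (the only degenerate case, $n<r_i$, makes the conclusion vacuous anyway). Note that the paper itself offers no proof of this proposition --- it is quoted from Graham--Rothschild--Spencer (Theorem 5.1.5) as a known ingredient --- and your argument is the standard textbook derivation of this Product Ramsey Theorem, so there is nothing to reconcile with the paper's text.
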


It is easy to iterate this statement to get the following.

\begin{claim} \label{turan_ramsey}
For any $\ell,n,r>0$ there exists $N>0$ such that the following holds. Let $B= B_1\stackrel .\cup\ldots\stackrel .\cup B_\ell$, where $|B_i|=N$, and assume that $[B]^r$ is colored in two colors. Then there exist $A_i\subseteq B_i\ (|A_i|=n)$ such that for any $E\in [A_1\cup\ldots\cup A_\ell]^r$, its color depends only on the tuple of cardinalities $\langle |E\cap A_1|,\ldots, |E\cap A_\ell|\rangle$.
\end{claim}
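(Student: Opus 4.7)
The plan is to apply Proposition \ref{ramsey} iteratively over the finite set of possible ``intersection patterns''
$$
\scr T \df \left\{\bar r = (r_1,\ldots,r_\ell) \in {\Bbb Z}^\ell \,:\, r_1,\ldots,r_\ell\geq 0 \text{ and } r_1+\cdots+r_\ell = r\right\}.
$$
Fix any enumeration $\scr T = \{\bar r^{(1)},\ldots,\bar r^{(T)}\}$, and call an $r$-subset $E$ of the ground set of \emph{shape $\bar r$} if $|E\cap B_i|=r_i$ for every $i$. The goal is to produce nested subsets $B_i = B_i^{(0)}\supseteq B_i^{(1)}\supseteq\cdots\supseteq B_i^{(T)}$ with the property that, for each $k$, the color of every $r$-subset of $B_1^{(k)}\cup\cdots\cup B_\ell^{(k)}$ having shape $\bar r^{(k)}$ is a single constant $c_k$; the desired $A_i$ are then any $n$-element subsets of $B_i^{(T)}$.

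The construction is inductive on $k$. Given $B_i^{(k-1)}$, let $I_k \df \{i : r_i^{(k)} > 0\}$. The $r$-subsets of $B_1^{(k-1)}\cup\cdots\cup B_\ell^{(k-1)}$ of shape $\bar r^{(k)}$ biject, via $E \mapsto (E\cap B_i^{(k-1)})_{i\in I_k}$, with the product $\prod_{i\in I_k}[B_i^{(k-1)}]^{r_i^{(k)}}$. Applying Proposition \ref{ramsey} to the induced 2-coloring of this product yields subsets $B_i^{(k)}\subseteq B_i^{(k-1)}$ ($i\in I_k$) of any prescribed size on which the coloring is monochromatic; for $i\notin I_k$ one sets $B_i^{(k)}\df B_i^{(k-1)}$. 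Any further nesting $B_i^{(k')}\subseteq B_i^{(k)}$ with $k' > k$ preserves monochromaticity of shape $\bar r^{(k)}$, since the property ``all $r$-subsets of shape $\bar r^{(k)}$ have color $c_k$'' is inherited by subsets of the ground set.

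The bookkeeping is routine: define sizes $N_T \df n \leq N_{T-1}\leq\cdots\leq N_0\df N$ by backwards induction, where $N_{k-1}$ is chosen large enough that Proposition \ref{ramsey}, applied to an $|I_k|$-partite ground set with parts of size $N_{k-1}$ and exponents $(r_i^{(k)})_{i\in I_k}$, guarantees monochromatic subparts of size $N_k$. There is no genuine obstacle here; the only care required is in handling the ``degenerate'' tuples in which some $r_i^{(k)}$ vanish, which is why Proposition \ref{ramsey} is applied only along the coordinates in $I_k$ while the remaining parts are passed through untouched.
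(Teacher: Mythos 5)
Your proof is correct and follows essentially the same route as the paper: for each composition $r=r_1+\cdots+r_\ell$ you induce a coloring of the corresponding product of $[B_i]^{r_i}$ and apply Proposition \ref{ramsey} recursively over all compositions in some order, with the sizes fixed by backwards induction. Your explicit handling of the degenerate coordinates with $r_i=0$ is a detail the paper leaves implicit, but it is the same argument.
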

\begin{proof}
For every partition $r=r_1+\cdots+r_\ell\ (r_i\geq 0)$, our original coloring of $[B]^r$ in a natural way induces a coloring of $[B_1]^{r_1}\times\cdots\times [B_\ell]^{r_\ell}$. Now we simply apply recursively Proposition \ref{ramsey} to all these partitions (arbitrarily ordered).
\end{proof}

When $r_1=\ldots=r_\ell=1$, Proposition \ref{ramsey} also has a density version that, in a slightly different form, has been extensively used in areas like Additive Combinatorics, Extremal Combinatorics and Complexity Theory.

\begin{proposition}[\protect{\cite[Theorem 5.1.4]{GrRS}}] \label{detupling}
For all $\ell>0$, $n>0$ and $\delta>0$ there exists $N_0>0$ so that if $|B_i|=N\ (1\leq i\leq\ell)$ with $N\geq N_0$ and $S\subseteq B_1\times\cdots\times B_\ell$ has $|S|\geq\delta N^\ell$, then there exist $A_i\subseteq B_i\ (|A_i|=n)$ such that $A_1\times\cdots\times A_\ell \subseteq S$.
\end{proposition}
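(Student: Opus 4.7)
The plan is to prove Proposition \ref{detupling} by induction on the number of parts $\ell$. The base case $\ell=1$ is immediate: any $S\subseteq B_1$ with $|S|\geq \delta N$ contains $n$ distinct elements as soon as $N\geq n/\delta$.

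For the inductive step, the idea is to single out $B_1$, fix a random size-$n$ subset $A_1\subseteq B_1$, and argue via convexity that a dense slice survives in $B_2\times\cdots\times B_\ell$ on which one can invoke the inductive hypothesis. Concretely, for each $(b_2,\ldots,b_\ell)\in B_2\times\cdots\times B_\ell$ set $d(b_2,\ldots,b_\ell)\df |\set{b_1\in B_1}{(b_1,b_2,\ldots,b_\ell)\in S}|$; the hypothesis $|S|\geq \delta N^\ell$ says the average of $d$ over $B_2\times\cdots\times B_\ell$ is at least $\delta N$. The probability that a uniformly random $A_1\in [B_1]^n$ satisfies $A_1\times\{(b_2,\ldots,b_\ell)\}\subseteq S$ equals $\binom{d(b_2,\ldots,b_\ell)}{n}/\binom{N}{n}$, so by convexity of $\binom{x}{n}$ (Jensen's inequality) the expected number of ``good'' tuples $(b_2,\ldots,b_\ell)$ is at least
$$
N^{\ell-1}\cdot \binom{\delta N}{n}\bigg/\binom{N}{n},
$$
which exceeds $(\delta^n/2)N^{\ell-1}$ once $N$ is large enough in terms of $n$ and $\delta$ alone.

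Fixing any $A_1$ that realizes at least this expectation, the ``shadow'' $S'\df \set{(b_2,\ldots,b_\ell)\in B_2\times\cdots\times B_\ell}{A_1\times\{(b_2,\ldots,b_\ell)\}\subseteq S}$ has density at least $\delta^n/2$. The inductive hypothesis applied with parameters $(\ell-1,\, n,\, \delta^n/2)$ then produces $A_i\subseteq B_i$ of size $n$ for $2\leq i\leq\ell$ with $A_2\times\cdots\times A_\ell\subseteq S'$; by definition of $S'$ this immediately gives $A_1\times A_2\times\cdots\times A_\ell\subseteq S$, completing the induction. The threshold $N_0(\ell,n,\delta)$ is then obtained by unwinding the recurrence $N_0(\ell,n,\delta)=\max\{N_1(n,\delta),\ N_0(\ell-1,n,\delta^n/2)\}$, which blows up fast in $n$ but harmlessly so for our purposes.

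The argument is essentially the K\H ov\'ari--S\'os--Tur\'an bound iterated $\ell-1$ times, and I do not foresee any genuine obstacle. The only mild technicality is the Jensen step: one should verify that $\binom{x}{n}$ is convex on nonnegative integers (this follows from the Pascal identity, as the second forward difference equals $\binom{x}{n-2}\geq 0$), and that the limit $\binom{\delta N}{n}\big/\binom{N}{n}\to \delta^n$ as $N\to\infty$ is uniform enough that the lower bound $\delta^n/2$ is actually reached for $N\geq N_1(n,\delta)$.
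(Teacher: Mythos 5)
The paper itself contains no proof of Proposition \ref{detupling}: it is imported verbatim from Graham--Rothschild--Spencer \cite{GrRS}, so there is no internal argument to compare yours against. Your proof is correct and is essentially the textbook argument for this statement: induct on $\ell$, and in the step run the K\H ov\'ari--S\'os--Tur\'an double count, namely average the indicator that a random $n$-set $A_1\subseteq B_1$ lies inside the fiber over $(b_2,\ldots,b_\ell)$, bound the expectation from below by convexity, pass to the dense ``shadow'' $S'$, and apply the inductive hypothesis with the degraded density $\delta^n/2$; unwinding $N_0(\ell,n,\delta)=\max\{N_1(n,\delta),N_0(\ell-1,n,\delta^n/2)\}$ is exactly the right bookkeeping. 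The only point to tighten is the Jensen step you already flag: the average degree $\delta N$ (or the true average $\bar d\geq\delta N$) need not be an integer, so discrete convexity of the sequence $\binom{x}{n}$ by itself does not literally evaluate the bound at $\delta N$. The standard fix is to apply Jensen to the convex real extension $\binom{x}{n}=x(x-1)\cdots(x-n+1)/n!$ for $x\geq n-1$ and $0$ for $x<n-1$, which agrees with the binomial coefficient at nonnegative integers and is nondecreasing on $[n-1,\infty)$, so that $\frac{1}{N^{\ell-1}}\sum\binom{d}{n}\geq\binom{\delta N}{n}$ once $\delta N\geq n-1$; the limit $\binom{\delta N}{n}/\binom{N}{n}\to\delta^n$ then yields your threshold $N_1(n,\delta)$. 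With that routine repair the argument is complete, and for the purposes of this paper (where only the existence of some finite $N_0$ matters) the rapidly growing bound is indeed harmless.
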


Now we have all the ingredients necessary to prove our main result.

\section{Proof of Theorem \ref{main}}

Fix an increasing sequence $\{G_m\}$ of 3-graphs not containing $I^3_4,H_1, H_2, H_3$ as induced subgraphs. We have to prove that $\liminf_{m\to\infty} p(\rho,G_m)\geq 4/9.$ Assume the contrary, then (by restricting to a sub-sequence) we can assume w.l.o.g. that
\begin{equation} \label{upper}
p(\rho, G_m)\leq 4/9-\epsilon
\end{equation}
for a fixed $\epsilon >0$ and all $m$.

Let us call a 3-graph $G$ {\em regular} if $E(G)\supseteq E(FDF(\Gamma))$ for some orgraph $\Gamma$ on the same set of vertices $V(G)$ without induced copies of $\vec C_4$ or $\bar P_3$, and {\em singular} otherwise. By Proposition \ref{fdf}, there exists an integer $\ell$ such that for {\em every} regular 3-graph $G$ on $\ell$ vertices,
\begin{equation} \label{lower}
p(\rho, G)\geq \frac 49-\frac{\epsilon}2.
\end{equation}

Fix for a moment an integer $m$ such that $|V(G_m)|\geq\ell$ and let
$$
R\df\sum_{{G\in\scr M_\ell}\atop {G\ \text{is regular}}} p(G,G_m)
$$
be the probability that a randomly chosen $\ell$-vertex induced subgraph of $G_m$ is regular. Given the formula \eqref{chain_rule} with $H:=\rho$ and $\widehat G:=G_m$, we get from \eqref{upper} and \eqref{lower} that $(4/9-\epsilon)\geq R\of{\frac 49-\frac{\epsilon}2}$ which implies that $1-R\geq\epsilon$. Hence, for some absolute (not depending on $m$) constant $\delta>0$, there exists a {\em singular} 3-graph $G\in\scr M_\ell$ such that $p(G, G_m)\geq\delta$.

Now we let $m$ vary. Since $\scr M_\ell$ is finite, we can assume w.l.o.g. that this singular 3-graph $G$ is the same for all $m$. And now we are going to apply the ``regularization'' machinery reviewed at the end of the previous section to arrive at a contradiction with singularity of $G$.

In Claim \ref{turan_ramsey} we set $n:=2$, $r:=3$, and let $N_1$ be the resulting bound. Next, we set
\begin{equation} \label{delta_prime}
\delta'=\frac 12 \ell^{-\ell}\delta,
\end{equation}
and apply Proposition \ref{detupling} with $n:=N_1$ and $\delta:=\delta'$. Let $N_0$ be the resulting bound, and now we fix $m$ such that $|V(G_m)|\geq \ell N_0$. W.l.o.g. we may assume that $|V(G_m)|$ is divisible by $\ell$, and let $N\df \frac 1\ell |V(G_m)|$. Note for the record that $N\geq N_0$.

Let $V(G)=[\ell]$. Consider a random balanced partition $V(G_m)=\rn{B_1}\stackrel .\cup \ldots\stackrel .\cup \rn{B_\ell}$ into $N$-sets. By a standard averaging argument, the expectation of the density of induced embeddings $\alpha\function G{G_m}$ such that $\alpha(i)\in \rn{B_i}\ (i\in [\ell])$ is at least $\delta'$ (recall that $\delta'$ is given by \eqref{delta_prime}). Fix an arbitrary balanced partition $V(G_m)=B_1\stackrel .\cup \ldots\stackrel .\cup B_\ell$ with this property, and let $S\subseteq [B_1]\times\ldots\times [B_\ell]$ consist of those tuples $(v_1,\ldots,v_\ell)$ for which the mapping  $\alpha\function{[\ell]}{V(G_m)}$ given by $\alpha(i) = v_i$ does define an induced embedding of $G$.

Applying Proposition \ref{detupling}, we find $A_i\subseteq B_i$ with $|A_i|=N_1$ and $A_1\times\ldots\times A_\ell\subseteq S$. And applying Claim \ref{turan_ramsey} (with $B_i:=A_i$) to the 2-coloring of $[A_1\cup\ldots\cup A_\ell]^3$ defined by the set of edges of $G_m$, we find distinct pairs of elements $a_i,b_i\in A_i$ such that for any $i\neq j\in [\ell]$,
$$
(a_ib_ia_j) \in E(G_m) \equiv (a_ib_ib_j) \in E(G_m).
$$
Note also for the record that for any $1\leq i<j<k\leq\ell$ and for any $c_i\in \{a_i, b_i\}$, $c_j\in \{a_j, b_j\}$,  $c_k\in \{a_k, b_k\}$, $(c_ic_jc_k)\in E(G_m)$ if and only if $(ijk)\in E(G)$, for any choice of the representatives $c_i, c_j, c_k$.

We define the directed graph $\Gamma$ on $[\ell]$ by introducing a directed edge $\edge ij$ if and only if $(a_ib_ia_j), (a_ib_ib_j)\not \in E(G_m)$ (note the negation!) As the first observation, since $\{a_i,b_i,a_j, b_j\}$ span at least one 3-edge, $\edge ij$ and $\edge ji$ can not simultaneously belong to $E(\Gamma)$, therefore $\Gamma$ is actually an oriented graph.

Now we claim that $E(G)\supseteq E(FDF(\Gamma))$ and that $\Gamma$ does not contain induced copies of $\bar P_3$ and $\vec C_4$; this will contradict the singularity of $G$. In the case analysis below (that we split into a sequence of simple claims), $i,j,k$ stand for arbitrary pairwise different elements of $[\ell]$.

\begin{claim}
If $\edge ij\in E(\Gamma)$ and $\edge ik\in E(\Gamma)$ then $(ijk)\in E(G)$.
\end{claim}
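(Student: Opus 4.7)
The plan is to extract a contradiction from $I^3_4$-freeness by inspecting a carefully chosen four-vertex subset of $G_m$. Specifically, I would look at $\{a_i, b_i, a_j, a_k\}$ and enumerate its four triples.

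First, by the very definition of $\Gamma$, the assumption $\edge ij \in E(\Gamma)$ gives $(a_ib_ia_j)\notin E(G_m)$, and $\edge ik\in E(\Gamma)$ gives $(a_ib_ia_k)\notin E(G_m)$. The two remaining triples are $(a_ia_ja_k)$ and $(b_ia_ja_k)$, each of which has exactly one element in $A_i$, one in $A_j$ and one in $A_k$. By the ``representative-independence'' consequence of Claim~\ref{turan_ramsey} recorded right before the definition of $\Gamma$, the color of either of these triples in $G_m$ agrees with the single bit ``$(ijk)\in E(G)$''.

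Now suppose, for contradiction, that $(ijk)\notin E(G)$. Then both $(a_ia_ja_k)$ and $(b_ia_ja_k)$ are non-edges of $G_m$, and together with the two non-edges coming from the arrows into $\Gamma$ we conclude that $\{a_i,b_i,a_j,a_k\}$ spans no 3-edge of $G_m$. This is precisely an induced $I^3_4$, contradicting our standing hypothesis that $G_m$ contains no induced copy of $I^3_4$. Hence $(ijk)\in E(G)$, as claimed.

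There is really no obstacle here: the argument is a one-line application of $I^3_4$-freeness once the four triples of $\{a_i,b_i,a_j,a_k\}$ have been classified using the Ramsey structure. I expect the subsequent claims (ruling out $\bar P_3$ and $\vec C_4$ in $\Gamma$, and completing $E(G)\supseteq E(FDF(\Gamma))$ by handling the ``out-degree $2$'' case symmetrically and the ``isolated vertex'' case) to require genuinely more work, because they will be the places where the forbidden 3-graphs $H_1, H_2, H_3$ on five vertices enter the picture; the present claim, by contrast, only needs the trivial forbidden $I^3_4$ on four vertices.
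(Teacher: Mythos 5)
Your proof is correct and takes essentially the same route as the paper's: both note that $(a_ib_ia_j)$ and $(a_ib_ia_k)$ are non-edges of $G_m$ by the definition of $\Gamma$, so $I^3_4$-freeness forces one of the remaining triples $(a_ia_ja_k)$, $(b_ia_ja_k)$ to be an edge, which by the representative-independence guaranteed by the Ramsey step gives $(ijk)\in E(G)$. The only difference is that you phrase it as a contradiction while the paper argues directly, and your closing remark is also accurate: the forbidden graphs $H_1,H_2,H_3$ enter only in the later claims.
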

\begin{proof}
Since $\{a_i, b_i, a_j, a_k\}$ is not independent in $G_m$, at least one of the two triples $(a_ia_ja_k)$, $(b_ib_jb_k)$ must be in $E(G_m)$ and this implies $(ijk)\in E(G)$.
\end{proof}

The remaining analysis does require the assumption that the 3-graphs $H_1,H_2,H_3$ are missing.

\begin{claim} \label{two}
If $i,j$ are independent in $\Gamma$, and $i,k$ are also independent in $\Gamma$, then $(ijk)\in E(G)$.
\end{claim}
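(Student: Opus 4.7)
The plan is to argue by contradiction: I assume $(ijk)\not\in E(G)$ and build an induced copy of $H_1$ inside $G_m$, contradicting $H_1$-freeness of $G_m$.

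First I extract rigid structure from the independence hypotheses. Saying $i,j$ are independent in $\Gamma$ means that neither of the two Ramsey color classes on $\{a_i,b_i,a_j,b_j\}$ (the cardinality types $(2,1)$ and $(1,2)$ with respect to $(A_i,A_j)$) is entirely missing from $E(G_m)$; since Claim~\ref{turan_ramsey} forces each class to be edge or non-edge uniformly, both are entirely present, so $\{a_i,b_i,a_j,b_j\}$ spans a tetrahedron $K_4^3$ in $G_m$. The same argument makes $\{a_i,b_i,a_k,b_k\}$ a tetrahedron. On the other hand, the contradiction assumption, together with the consistency observation recorded just before the definition of $\Gamma$, forces every $(1,1,1)$-type triple $(c_ic_jc_k)$ with $c_i \in \{a_i,b_i\}$, $c_j \in \{a_j,b_j\}$, $c_k \in \{a_k,b_k\}$ to be a non-edge of $G_m$ (eight forbidden triples).

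Next I split on the $\Gamma$-relation between $j$ and $k$, producing in each case a 5-vertex set whose induced structure is exactly $H_1$: a tetrahedron joined to a fifth vertex whose link is a perfect matching. If $j,k$ are also independent in $\Gamma$, I take the 5-set $\{a_j,b_j,a_k,b_k,a_i\}$; the first four already form a tetrahedron (applying the previous paragraph to the pair $j,k$), and the link of $a_i$ contains precisely the two edges $\{a_j,b_j\}$ and $\{a_k,b_k\}$ (from the $(i,j)$- and $(i,k)$-tetrahedra), while the remaining four pairs give $(1,1,1)$-triples and hence non-edges, so the link is a perfect matching. If $\edge jk \in E(\Gamma)$, then $\edge kj \notin E(\Gamma)$ (since $\Gamma$ is oriented), and the same Ramsey argument puts both $(a_kb_ka_j)$ and $(a_kb_kb_j)$ into $E(G_m)$; I then use the 5-set $\{a_i,b_i,a_k,b_k,a_j\}$, whose tetrahedron is $\{a_i,b_i,a_k,b_k\}$ and whose link at $a_j$ is again the matching $\{a_i,b_i\},\{a_k,b_k\}$. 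The case $\edge kj \in E(\Gamma)$ is symmetric, using $\{a_i,b_i,a_j,b_j,a_k\}$.

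The main subtlety is conceptual rather than computational: one has to notice in advance that $H_1$---the $H$ whose link is a perfect matching---is the specific forbidden graph that rules out the branch $(ijk) \not\in E(G)$, and then in each of the three cases one must pick the 5-set so that its ``tetrahedron half'' is supplied by the pair already known to span a $K_4^3$. Once that choice is made, the remainder is a routine enumeration of the seven triples containing the designated fifth vertex; no further nontrivial input (Ramsey or otherwise) is required.
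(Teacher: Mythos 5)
Your proof is correct and follows essentially the same route as the paper: both arguments note that the independence hypotheses make $\{a_i,b_i,a_j,b_j\}$ and $\{a_i,b_i,a_k,b_k\}$ cliques and then derive a contradiction from an induced copy of $H_1$ (a tetrahedron plus a fifth vertex whose link is a perfect matching). The only cosmetic difference is that you split into three cases according to the $\Gamma$-relation between $j$ and $k$, where the paper disposes of all of them at once by assuming w.l.o.g.\ that $\langle j,k\rangle\notin E(\Gamma)$ and using the single $5$-set $\{a_i,b_i,a_j,b_j,a_k\}$.
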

\begin{proof}
Note first that the assumption of our claim simply says that both sets $\{a_i, b_i, a_j, b_j\}$ and $\{a_i, b_i, a_k, b_k\}$ span a clique in $G_m$. By symmetry, we can assume w.l.o.g. that $\edge jk\not\in E(\Gamma)$, that is, $(a_jb_ja_k), (a_jb_jb_k)\in E(G_m)$. But then since $\{a_i, b_i, a_j, b_j, a_k\}$ does not span a copy of $H_1$, at least one of the four edges $(c_ic_ja_k)\ (c_i\in \{a_i, b_i\},\ c_j\in\{a_j,b_j\})$ must be present in $G_m$ which implies $(ijk)\in E(G)$.
\end{proof}

Reviewing the definition of $FDF(\Gamma)$, we see that we have proved $E(FDF(\Gamma))\subseteq E(G)$. We still have to verify that $\Gamma$ is $\bar P_3$-free and $\vec C_4$-free.

\begin{claim} \label{three}
If $i,j$ are independent in $\Gamma$ and $\langle j,k\rangle\in E(\Gamma)$ then $(ijk)\not\in E(G)$.
\end{claim}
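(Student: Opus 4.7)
The plan is to argue by contradiction: assume $(ijk)\in E(G)$, and produce an induced copy of $H_2$ or $H_3$ on the 5-set $U\df\{a_i,b_i,a_j,b_j,a_k\}$ inside $G_m$, which will contradict the choice of the sequence $\{G_m\}$.

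The first step is to catalogue the edge status of all ten 3-subsets of $U$. The independence of $i,j$ in $\Gamma$, together with the Ramsey equivalence $(a_ib_ia_j)\in E(G_m)\equiv (a_ib_ib_j)\in E(G_m)$ and its twin obtained by swapping the roles of $i,j$, forces $\{a_i,b_i,a_j,b_j\}$ to span a clique in $G_m$; this settles 4 subsets. The assumption $(ijk)\in E(G)$, via the representative-freedom observation recorded just before Claim \ref{three}, places the 4 triples $\{c_i,c_j,a_k\}$ with $c_i\in\{a_i,b_i\}$, $c_j\in\{a_j,b_j\}$ into $E(G_m)$. The hypothesis $\langle j,k\rangle\in E(\Gamma)$ directly yields the non-edge $\{a_j,b_j,a_k\}$. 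The only undetermined 3-subset of $U$ is $\{a_i,b_i,a_k\}$, whose edge status, by the Ramsey equivalence applied to the pair $(i,k)$, coincides with that of $\{a_i,b_i,b_k\}$; call this common value $x\in\{0,1\}$.

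The second step is a split on $x$. If $x=1$, the only non-edge among the ten 3-subsets of $U$ is $\{a_j,b_j,a_k\}$, so the link of $a_k$ on the clique $\{a_i,b_i,a_j,b_j\}$ is the complement of the single pair $\{a_j,b_j\}$, and $G_m|_U$ is an induced copy of $H_3$. If $x=0$, then both $\{a_j,b_j,a_k\}$ and $\{a_i,b_i,a_k\}$ are non-edges, the link of $a_k$ on the same clique is the complement of the perfect matching $\{\{a_i,b_i\},\{a_j,b_j\}\}$, and $G_m|_U$ is an induced copy of $H_2$. Either outcome contradicts $(H_2,H_3)$-freeness of $G_m$, completing the contradiction.

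There is no substantive technical obstacle: once Claim \ref{two} has been digested, this is essentially a bookkeeping exercise on a 5-vertex set. The only conceptual observation is that the two possible values of $x$ force exactly $H_3$ and $H_2$ respectively; together with Claim \ref{two}, where $H_1$ was used, this suggests that the three forbidden configurations in \eqref{forbidden} have been engineered precisely to cover the obstructions arising during the attempted Fon-der-Flaass reconstruction on $\Gamma$, so the remaining claims needed to complete the proof of Theorem \ref{main} are likely to be dispatched by analogous 5-vertex case analyses.
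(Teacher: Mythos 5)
Your proof is correct and follows essentially the same route as the paper: assume $(ijk)\in E(G)$, determine the status of all ten triples in $\{a_i,b_i,a_j,b_j,a_k\}$, and observe that the only possible non-edges are $(a_jb_ja_k)$ and possibly $(a_ib_ia_k)$, yielding an induced $H_3$ or $H_2$ respectively. The paper's argument is just a terser version of your two-case bookkeeping, so there is nothing to add.
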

\begin{proof}
We again look at the configuration spanned by $\{a_i,b_i,a_j,b_j,a_k\}$. Assuming $(ijk)\in E(G)$, the only 3-edges that are missing here are $(a_jb_ja_k)$ and, possibly, $(a_ib_ia_k)$. Which gives us either $H_2$ or $H_3$, and this contradiction proves that $(ijk)\not\in E(G)$.
\end{proof}

Now, $\Gamma$ may not contain an induced copy of $\bar P_3$: the case when the three vertices spanning $\bar P_3$ in $\Gamma$ do not form an edge of $G$ is taken care of by Claim \ref{two}, and the case when it is an edge is ruled out by Claim \ref{three}. Also, $\Gamma$ may not contain a copy of $\vec C_4$: since these four vertices may not be independent in $G$, three of them must form an edge which is again in contradiction with Claim \ref{three}.

We have arrived at the desired contradiction by constructing a Fon-der-Flaass realization $\Gamma$ for a spanning subgraph of the 3-graph $G$. This completes the proof of our main result.

\section{Miscellaneous calculations}

We include here two other results of a similar flavor, albeit seemingly less interesting: the additional forbidden 3-graphs are present in $FDF(\Gamma_K)$, and, in the second case, are present even in Turan's original example.

Our first statement improves upon \eqref{drawback}.

\begin{theorem} \label{drawback_improved}
$\pi_{\min}(I^3_4, H_4)=4/9$, where $H_4$ is a 3-graph on $[5]$ with $E(H_4)=\{(123)(124) (234) (135)\}$.
\end{theorem}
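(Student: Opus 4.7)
The bound $\pi_{\min}(I^3_4, H_4)\leq 4/9$ is immediate from Tur\'an's original construction $T$: it is $I^3_4$-free, has edge density $4/9$, and (as was already exploited in proving \eqref{drawback}) avoids $G_3$ as an induced subgraph. Since $H_4$ restricted to $\{1,2,3,4\}$ is precisely $G_3$, every $G_3$-free 3-graph is automatically $H_4$-free, so $T$ lies in the class under consideration and witnesses the upper bound.

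For the matching lower bound, my plan is to run the standard flag algebra semi-definite programming method of \cite{flag} inside the algebra $\mathcal{A}^{I^3_4, H_4}$. Fix a working level $N$ (I would begin with $N=7$) and enumerate the finite set of isomorphism classes of admissible $N$-vertex 3-graphs, together with the admissible types $\sigma$ of odd size $k\leq N-2$ and, for each such $\sigma$, the collection of $\sigma$-flags on $(N+k)/2$ vertices. The goal is to produce positive semi-definite matrices $Q_\sigma$, one per type, witnessing the asymptotic inequality
$$
p(\rho, G)-\frac{4}{9} \;\geq\; \sum_\sigma \langle Q_\sigma,\, M_\sigma(G)\rangle \;-\; o(1)
$$
for every admissible $G$ of large order, where $M_\sigma(G)$ is the Gram-type matrix of $\sigma$-flag product densities in $G$. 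Once this is in hand, nonnegativity of the right-hand side and evaluation on an extremal sequence $\{G_m\}$ force $\liminf_m p(\rho, G_m)\geq 4/9$.

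Concretely, I would assemble the admissible flag lists and the flag-product multiplication tensor by computer, pass the resulting SDP to a numerical solver (e.g.\ SDPA-GMP or CSDP), and then round the floating-point optimum to an exact rational certificate. I expect the genuine work to lie in this final rounding step: the numerical optimum typically lands tantalizingly close to $4/9$, and one must identify the correct zero eigenspace of each $Q_\sigma$ --- which, in view of Pikhurko's uniqueness result \cite{Pik}, should be spanned by the $\sigma$-flag density vectors coming from Tur\'an's construction itself --- and then round the restriction of the numerical solution to its orthogonal complement. Should $N=7$ prove too small for tightness, I would enlarge to $N=8$; the authors' designation of this as a \emph{routine} calculation suggests these modest levels suffice.
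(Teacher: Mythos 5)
Your approach matches the paper's: the upper bound comes from Tur\'an's original construction (which is $G_3$-free and hence $H_4$-free, since $H_4$ restricted to $\{1,2,3,4\}$ is an induced copy of $G_3$), and the lower bound is exactly the routine flag-algebra SDP calculation that the paper itself does not exhibit but delegates to the Flagmatic software. The only caveat, which applies equally to the paper, is that the argument is complete only once the SDP certificate and its exact rounding are actually produced.
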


Another viable strategy in approaching difficult extremal problems (see e.g. \cite{FPS}) is to try to find in a hypothetical counterexample increasingly large pieces that ``correspond'' to known extremal configurations. For Turan's (3,4)-problem we have been able to verify the first step on this road.
\begin{theorem}
$\pi_{\min}(I^3_4, M_2)\geq 0.4557$, where $M_2\df FDF(\Gamma_K)|_{{\Bbb Z}_3\times \{1,2\}}$ is the only extremal configuration on 6 vertices.
\end{theorem}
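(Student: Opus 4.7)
The plan is to prove this via a routine flag algebra computation, following the same template that yielded $\pi_{\min}(I^3_4)\geq 0.438334$ in \cite{turan}, but now applied to the narrower class of $(I^3_4,M_2)$-free 3-graphs; the exclusion of the six-vertex configuration $M_2$ shrinks the pool of admissible local models and thereby lifts the lower bound above $4/9$.

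First, I would fix a computational window size $N$ (typically $N=7$ or $N=8$ for tetrahedron-type problems) and enumerate, up to isomorphism, the finite set $\scr F_N\subseteq\scr M_N$ of all 3-graphs on $N$ vertices that contain neither $I^3_4$ nor $M_2$ as an induced subgraph. Since $M_2$ is already induced in Tur\'an's original extremal configuration (as noted at the beginning of this section), the restriction from the pure $I^3_4$-free pool to $\scr F_N$ is genuinely strict and removes precisely those local models that are responsible for the $4/9$ barrier in the unrestricted SDP.

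Second, I would set up the usual Cauchy--Schwarz SDP in the 3-graph flag algebra \cite{flag}: for each type $\sigma$ of appropriate size and parity, enumerate the $\sigma$-flags living inside $(I^3_4,M_2)$-free 3-graphs on the chosen vertex budget, precompute their multiplication and unlabelling coefficients, and then search for positive semidefinite matrices $Q_\sigma$ together with nonnegative coefficients $c_G$ indexed by $G\in\scr F_N$ such that, in the flag algebra modulo the chain rule \eqref{chain_rule}, one obtains an identity of the shape
$$
p(\rho,\widehat G)\;-\;0.4557 \;=\; \sum_{G\in\scr F_N} c_G\, p(G,\widehat G)\;+\;\sum_\sigma \bigl(\text{PSD $\sigma$-expression from }Q_\sigma\bigr)\;+\;o(1).
$$
Since the right-hand side is nonnegative for every $(I^3_4,M_2)$-free 3-graph $\widehat G$ on sufficiently many vertices, the bound $\pi_{\min}(I^3_4,M_2)\geq 0.4557$ follows immediately.

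Third, the SDP is solved numerically with a standard interior-point solver, and the floating-point output is converted into a rigorous rational certificate. Because the target $0.4557$ is explicitly not claimed to be tight, it is not necessary to rationalize a sharp optimum: one rounds the numerical $Q_\sigma$ to rational PSD matrices and verifies the associated linear inequalities in exact arithmetic (e.g.\ via Cholesky or eigenvalue bounds), with the rounding slack absorbed into the gap between the stated bound and the actual SDP optimum. The main obstacle is therefore purely computational scale --- the sizes of $\scr F_N$ and of the flag sets make the matrices large and the verification tedious --- but no new conceptual ingredient beyond the framework of \cite{turan,flag} is required.
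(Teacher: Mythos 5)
Your proposal follows exactly the paper's route: the paper proves this bound by a routine flag-algebra SDP computation over $(I^3_4,M_2)$-free 3-graphs (checkable with Flagmatic \cite{FaV}) and does not print the certificate, which is precisely the plan you describe. The only caveat is that, like the paper, you exhibit no explicit PSD certificate, so the actual content of the proof remains the (unstated) numerical computation.
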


Since these days an interested reader can check statements like this using the publicly available Flagmatic software \cite{FaV}, we do not present here  (tedious!) results of our calculations.

\bibliographystyle{alpha}
\bibliography{razb}
\end{document}